\newtheorem{prop}{Proposition}
\newtheorem{lemm}[prop]{Lemma}
\theoremstyle{definition}
\newtheorem{remk}[prop]{Remark}
\newtheorem{algo}[prop]{Algorithm}
\newcommand{\prpref}[2][Proposition~]{#1\ref{prp:#2}}
\newcommand{\lemref}[2][Lemma~]{#1\ref{lem:#2}}
\newcommand{\rmkref}[2][Remark~]{#1\ref{rmk:#2}}
\newcommand{\secref}[2][Section~]{#1\ref{sec:#2}}
\newcommand{\clC}[1]{\mathbf{C}(#1)} \newcommand{\clS}{\mathbf{S}}
\newcommand{\clT}{\mathbf{T}}
\newcommand{\clB}{\mathbf{B}}
\newcommand{\clG}[1]{\mathbf{G}(#1)}
\newcommand{\clH}[1]{\mathbf{H}(#1)}
\newcommand{\Tor}[4][R]{\operatorname{Tor}^{#1}_{#2}(#3,#4)}
\newcommand{\Hom}[3][R]{\operatorname{Hom}_{#1}(#2,#3)}
\newcommand{\Ext}[4][R]{\operatorname{Ext}_{#1}^{#2}(#3,#4)}
\newcommand{\poly}[2][k]{#1[#2]}
\newcommand{\rnk}[2][k]{\operatorname{rank}_{#1}#2}
\newcommand{\pd}[2][R]{\operatorname{proj.\!dim}_{#1}#2}
\newcommand{\lra}{\longrightarrow}
\newcommand{\dptR}{\operatorname{depth}R}
\newcommand{\dimR}{\operatorname{dim}R}
\newcommand{\dpt}[2][R]{\operatorname{depth}_{#1}#2}
\newcommand{\mapdef}[4][\rightarrow]{\nobreak{#2\colon #3 #1 #4}}
\newcommand{\qtext}[1]{\quad\text{#1}\quad}
\newcommand{\qqtext}[1]{\qquad\text{#1}\qquad}
\newcommand{\qand}{\qtext{and}}
\newcommand{\qqand}{\qqtext{and}}
\renewcommand{\b}{\beta}
\newcommand{\n}{\mathfrak{n}}
\newcommand{\QQ}{\mathbb{Q}}
\begin{document}

\title{Local rings of embedding codepth 3:\\ a classification
  algorithm}

\author{Lars Winther Christensen\thanks{Part of this work was done
    while the authors visited MSRI during the Commutative Algebra
    program in spring 2013. LWC was partly
    supported by NSA grant H98230-11-0214.}\\
  Texas Tech University,\\
  Lubbock, TX 79409, U.S.A.\\
  lars.w.christensen@ttu.edu \and
  Oana Veliche\\
  Northeastern University,\\
  Boston, MA~02115, U.S.A.\\
  o.veliche@neu.edu}

\date{25 September 2014}

\maketitle

\begin{abstract}
  Let $I$ be an ideal of a regular local ring $Q$ with residue field
  $k$. The length of the minimal free resolution of $R=Q/I$ is called
  the codepth of $R$. If it is at most $3$, then the resolution
  carries a structure of a differential graded algebra, and the
  induced algebra structure on $\Tor[Q]{\ast}{R}{k}$ provides for a
  classification of such local rings.

  We describe the \emph{Macaulay\,2} package \emph{CodepthThree} that
  implements an algorithm for classifying a local ring as above by
  computation of a few cohomological invariants.
\end{abstract}

\thispagestyle{empty}

\section{Introduction and notation}

Let $R$ be a commutative noetherian local ring with residue field
$k$. Assume that $R$ has the form $Q/I$ where $Q$ is a regular local
ring with maximal ideal $\n$ and $I \subseteq \n^2$. The embedding
dimension of $R$ (and of $Q$) is denoted $e$. Let
\begin{equation*}
  F = 0 \lra F_c \lra \cdots \lra F_1 \lra F_0 \lra 0
\end{equation*}
be a minimal free resolution of $R$ over $Q$. Set $d = \dptR$; the
length $c$ of the resolution $F$ is by the Auslander--Buchsbaum
formula
\begin{equation*}
  c = \pd[Q]{R} = \dpt[]{Q} - \dpt[Q]{R} = e-d,
\end{equation*}
and one refers to this invariant as the \emph{codepth} of $R$. In the
following we assume that $c$ is at most $3$. By a theorem of Buchsbaum
and Eisenbud \cite[3.4.3]{bruher} the resolution $F$ carries a
differential graded algebra structure, which induces a unique
graded-commutative algebra structure on $A = \Tor[Q]{\ast}{R}{k}$. The
possible structures were identified by Weyman~\cite{JWm89} and by
Avramov, Kustin, and Miller \cite{AKM-88}. According to the
multiplicative structure on $A$, the ring $R$ belongs to exactly one
of the classes designated $\clB$, $\clC{c}$, $\clG{r}$, $\clH{p,q}$,
$\clS$, and $\clT$. Here the parameters $p$, $q$, and $r$ are given by
\begin{equation*}
  p = \rnk{(A_1\cdot A_1)}, \quad 
  q = \rnk{(A_1\cdot A_2)}, \qand 
  r = \rnk{(\mapdef{\delta}{A_2}{\Hom[k]{A_1}{A_3}})},
\end{equation*}
where $\delta$ is the canonical map. See \cite{LLA12,AKM-88,JWm89} for
further background and details.

When, in the following, we talk about classification of a local ring
$R$, we mean the classification according to the multiplicative
structure on $A$. To describe the classification algorithm, we need a
few more invariants of $R$. Set
\begin{equation*}
  l = \rnk[Q]{F_1} - 1 \qqand n = \rnk[Q]{F_c};
\end{equation*}
the latter invariant is called the \emph{type} of $R$.  The
\emph{Cohen--Macaulay defect} of $R$ is $h = \dimR - d$.  The Betti
numbers $\b_i$ and the Bass numbers $\mu_i$ record ranks of cohomology
groups,
\begin{equation*}
  \b_i  = \b_i^R(k) = \rnk{\Ext{i}{k}{k}} \qqand   \mu_i =
  \mu_i(R)  = \rnk{\Ext{i}{k}{R}}.
\end{equation*}
The generating functions $\sum_{i=0}^{\infty}\b_it^i$ and
$\sum_{i=0}^{\infty}\mu_it^i$ are called the \emph{Poincar\'e series}
and the \emph{Bass series} of $R$.

\section{The algorithm}

For a local ring of codepth $c \le 3$, the class together with the
invariants $e$, $c$, $l$, and $n$ completely determine the Poincar\'e
series and the Bass series of $R$; see \cite{LLA12}. Conversely, one
can determine the class of $R$ based on $e$, $c$, $l$, $n$, and a few
Betti and Bass numbers; in the following we describe how.

\begin{lemm}
  \label{lem:pqr}
  For a local ring $R$ of codepth $3$ the invariants $p$, $q$, and $r$
  are determined by $e$, $l$, $n$, $\b_2$, $\b_3$, $\b_4$, and
  $\mu_{e-2}$ through the formulas
  \begin{align*}
    p &= n + le + \b_2 - \b_3 + \textstyle\binom{e-1}{3},\\
    q &= (n-p)e +l\b_2 + \b_3  -\b_4 + \textstyle\binom{e-1}{4}, \ \text{ and}\\
    r&= l+n-\mu_{e-2}.
  \end{align*}
\end{lemm}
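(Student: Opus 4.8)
The plan is to express the Betti numbers $\b_2$, $\b_3$, $\b_4$ and the Bass number $\mu_{e-2}$ as explicit polynomials in $e$, $l$, $n$, $p$, $q$, $r$ by reading them off the Poincar\'e series $\sum_{i\ge0}\b_it^i$ and the Bass series $\sum_{i\ge0}\mu_it^i$, and then to invert these relations. As recalled above, by \cite{LLA12} both series are rational and depend only on the class of $R$ together with $e$, $c=3$, $l$, $n$; since the triple $(p,q,r)$ determines the class, the coefficients of both series are in fact functions of $e$, $l$, $n$, $p$, $q$, $r$ alone.

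For the Betti numbers, recall that for any local ring
\begin{equation*}
  \sum_{i\ge0}\b_it^i=\frac{(1+t)^{\varepsilon_1}(1+t^{3})^{\varepsilon_3}(1+t^{5})^{\varepsilon_5}\cdots}{(1-t^{2})^{\varepsilon_2}(1-t^{4})^{\varepsilon_4}\cdots}
\end{equation*}
where $\varepsilon_i=\varepsilon_i(R)$ is the $i$-th deviation of $R$; expanding the right-hand side through degree $4$ yields
\begin{align*}
  \b_2&=\textstyle\binom{e}{2}+\varepsilon_2,\\
  \b_3&=\textstyle\binom{e}{3}+e\varepsilon_2+\varepsilon_3,\ \text{ and}\\
  \b_4&=\textstyle\binom{e}{4}+\binom{e}{2}\varepsilon_2+\binom{\varepsilon_2+1}{2}+e\varepsilon_3+\varepsilon_4.
\end{align*}
Now $\varepsilon_1=e$ and $\varepsilon_2=\rnk[Q]{F_1}=l+1$, and $\rnk{A_2}=l+n$ since the ranks along $F$ alternate to $\rnk[Q]{R}=0$. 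Moreover, for embedding codepth $3$ the classification \cite{LLA12,AKM-88,JWm89} pins down the higher deviations from the algebra structure of $A$: one has $\varepsilon_3=\rnk{A_2}-p=(l+n)-p$ and $\varepsilon_4=\binom{l+1}{2}+n-p-q$. Substituting these values into the three displays and reorganizing the binomial coefficients, one solves the first two for $p$ (as a function of $e$, $l$, $n$, $\b_2$, $\b_3$) and then the third for $q$ (as a function of $e$, $l$, $n$, $p$, $\b_2$, $\b_3$, $\b_4$); the outcomes are the first two formulas of the lemma.

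For the Bass number, observe that $d=\dptR=e-c=e-3$, so that $e-2=d+1$ and $\mu_{e-2}$ is the Bass number of $R$ just above its depth. By \cite{LLA12} the Bass series of a codepth $3$ ring begins $n\,t^{d}+\mu_{d+1}\,t^{d+1}+\cdots$, with $\mu_{d+1}=\rnk{A_2}-r=(l+n)-r$; the term $r$ here is the rank of the canonical map $\delta\colon A_2\to\Hom[k]{A_1}{A_3}$, so $\mu_{d+1}$ records the cokernel of $\delta$, which has dimension $(l+n)-r$. Solving for $r$ gives the third formula.

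The step that carries the real weight is the claim that the expressions for $\varepsilon_3$, $\varepsilon_4$, and $\mu_{d+1}$ hold \emph{uniformly} in $(p,q,r)$. In \cite{LLA12} the Poincar\'e and Bass series are tabulated one class at a time — separately for $\clB$, $\clC{3}$, $\clG{r}$, $\clH{p,q}$, $\clS$, and $\clT$ — so establishing the lemma amounts to checking, for each of these six classes, that the tabulated rational functions have the asserted low-order Taylor coefficients once $(p,q,r)$ is specialized to the parameters of that class; this is a finite, if fiddly, verification. Granting it, the three displayed formulas follow at once from the inversions above. The only further point to keep in mind is that $e-2\ge d$, because $c=3$, so that $\mu_{e-2}$ genuinely is the Bass number $\mu_{d+1}$ and not $0$.
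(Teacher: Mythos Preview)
Your argument is correct but takes a longer road than necessary. The paper simply quotes from \cite[2.1]{LLA12} the \emph{uniform} rational expressions
\[
\sum_{i\ge 0}\b_i t^i=\frac{(1+t)^{e-1}}{1-t-lt^2-(n-p)t^3+qt^4+\cdots}
\qquad\text{and}\qquad
\sum_{i\ge 0}\mu_i t^i=t^{d}\,\frac{n+(l-r)t+\cdots}{1-t+\cdots},
\]
valid for every codepth~$3$ local ring, and reads off the three formulas by equating coefficients through degree~$4$ (respectively degree~$d+1$). Your route via the deviation product formula and the identities $\varepsilon_3=l+n-p$, $\varepsilon_4=\binom{l+1}{2}+n-p-q$ is equivalent---indeed those deviation values are exactly what one obtains by comparing the denominator above with $\prod_{i\ge 1}(1-t^{2i})^{\varepsilon_{2i}}\big/\prod_{i\ge 1}(1+t^{2i-1})^{\varepsilon_{2i-1}}$---but it introduces an extra layer and then defers the ``real weight'' to a class-by-class check that is in fact already absorbed in the uniform statement of \cite[2.1]{LLA12}. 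So the verification you flag as ``finite, if fiddly'' is not needed here. One small slip: in your Bass-number discussion, $(l+n)-r$ is the dimension of the \emph{kernel} of $\delta$ (since $\dim_k A_2=l+n$ and $\operatorname{rank}\delta=r$), not its cokernel; the formula $\mu_{d+1}=l+n-r$ is nonetheless correct, as it drops out of the displayed Bass series.
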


\begin{proof}
  The Poincar\'e series of $R$ has by \cite[2.1]{LLA12} the form
  \begin{equation}
    \sum_{i=0}^\infty \b_it^i 
    = \frac{(1+t)^{e-1}}{1-t-lt^2-(n-p)t^3+qt^4 + \cdots}\:,
  \end{equation}
  and expansion of the rational function yields the expressions for
  $p$ and $q$.

  One has $d = e-3$ and the Bass series of $R$ has, also by
  \cite[2.1]{LLA12}, the form
  \begin{equation}
    \sum_{i=0}^\infty \mu_it^i = t^d\frac{n + (l-r)t + \cdots}{1-t + \cdots}\;;
  \end{equation}
  expansion of the rational function now yields the expression for
  $r$.
\end{proof}

\begin{prop}
  \label{prp:pqr}
  A local ring $R$ of codepth $3$ can be classified based on the
  invariants $e$, $h$, $l$, $n$, $\b_2, \b_3,\b_4$, $\mu_{e-2}$, and
  $\mu_{e-1}$.
\end{prop}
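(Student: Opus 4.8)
The plan is to reduce the classification to the structure theory of Tor algebras of codepth~$3$ local rings due to Weyman \cite{JWm89} and Avramov, Kustin, and Miller \cite{AKM-88}, fed with the invariants produced by \lemref{pqr}.

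By \lemref{pqr}, the data $e,l,n,\b_2,\b_3,\b_4,\mu_{e-2}$ determine the triple $(p,q,r)$. Moreover the three Betti numbers $\rnk{A_1},\rnk{A_2},\rnk{A_3}$ are available: since $c=3$ the ideal $I$ is nonzero, so $R$ has rank zero over the domain $Q$; hence the alternating sum of the ranks of $F$ vanishes, which gives $\rnk[Q]{F_2}=l+n$ and thus $(\rnk{A_1},\rnk{A_2},\rnk{A_3})=(l+1,\,l+n,\,n)$. The first step is to compute, from the multiplication table of $A$ in normal form, the value of $(p,q,r)$ for each of the classes $\clB,\clC{3},\clG{r},\clH{p,q},\clS,\clT$; since $p$, $q$, and $r$ are by construction the ranks of the products that set the classes apart, running through the six families shows that the class of $R$ is pinned down by $(p,q,r)$ together with $l$ and $n$ in all but a short list of borderline configurations --- those in which two classes yield the same triple $(p,q,r)$ and the same Betti numbers.

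The second step disposes of the borderline configurations using the two remaining invariants. The Cohen--Macaulay defect $h$ fixes the Krull dimension, $\dimR=e-3+h$ (recall $d=e-3$ here), and realizability of a given normal form can force $h$ to a specific value or range, which separates classes differing in Cohen--Macaulayness; for instance a complete intersection is Cohen--Macaulay, so membership in $\clC{3}$ forces $h=0$. The Bass number $\mu_{e-1}$ contributes, through the form of the Bass series in \cite[2.1]{LLA12}, one more coefficient of the Bass numerator than the one already used to read off $r$; by the classification of Poincar\'e and Bass series in \cite{LLA12} that coefficient distinguishes the classes still tied once $p,q,r,l,n$, and $h$ are known. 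Combining the two refinements finishes the case analysis.

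The main obstacle is exactly this last step: one must determine the precise (finite) list of pairs of classes that $(p,q,r,l,n)$ fails to separate --- these are the cases the package must treat by a dedicated rule --- and then verify, from the normal forms in \cite{AKM-88,JWm89} and the explicit Poincar\'e and Bass series in \cite{LLA12}, that $h$ and $\mu_{e-1}$ together always break the tie. The remainder is elementary manipulation of rational functions, exactly in the spirit of the proof of \lemref{pqr}.
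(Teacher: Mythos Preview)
Your overall strategy matches the paper's---compute $(p,q,r)$ via \lemref{pqr} and run a case analysis against the $(p,q,r)$-signatures of the classes---but the proposal is not yet a proof: you explicitly postpone identifying the borderline pairs and verifying that $h$ and $\mu_{e-1}$ separate them, calling this ``the main obstacle.'' That finite case analysis \emph{is} the content of the argument, and the paper carries it out explicitly rather than leaving it as an exercise.

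For comparison, here is what the completed analysis looks like. First, $h=0$ and $n=1$ together detect the Gorenstein case, after which $l$ alone decides between $\clC{3}$ (when $l=2$) and $\clG{l+1}$. In the non-Gorenstein case the signatures are $\clT\colon(3,0,0)$, $\clB\colon(1,1,2)$, $\clG{r}\colon(0,1,r)$ for $r\ge 2$, and $\clH{p,q}\colon(p,q,q)$. Hence $q\ge 2$, or $p=2$, or $p\ge 4$, already forces $\clH{p,q}$. The only ambiguities are: $p=0$, where $\clG{r}$ and $\clH{0,q}$ are separated by whether $r=q$; $p=1$, where $\clB$ and $\clH{1,q}$ are separated the same way; and $p=3$, where $\clT$ and $\clH{3,0}$ share $(p,q,r)=(3,0,0)$ and are distinguished only by $\mu_{e-1}$, which expansion of the Bass series from \cite[2.1]{LLA12} shows equals $\mu_{e-2}+ln-2$ for $\clT$ and $\mu_{e-2}+ln-3$ for $\clH{3,q}$. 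Two small corrections to your sketch: the class $\clS$ belongs to codepth~$2$ and does not occur here; and $h$ is not used as a late tiebreaker on Cohen--Macaulayness but rather, together with $n$, at the outset to split off the Gorenstein case.
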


\begin{proof}
  First recall that one has $h=0$ and $n=1$ if and only if $R$ is
  Gorenstein; see \cite[3.2.10]{bruher}. In this case $R$ is in class
  $\clC{3}$ if $l=2$ and otherwise in class $\clG{l+1}$.

  Assume now that $R$ is not Gorenstein. The invariants $p$, $q$, and
  $r$ can be computed from the formulas in \lemref{pqr}. It remains to
  determine the class, which can be done by case analysis. Recall from
  \cite[1.3 and 3.1]{LLA12} that one has
  \begin{equation*}
    \begin{array}{r|ccc}
      Class &  p & q & r\\
      \hline
      \clT & 3 &0 &0 \\
      \clB & 1 &1 &2 \\
      \clG{r}\ [r\ge 2]& 0 &1 &r \\
      \clH{p,q} &  p & q & q\\
    \end{array}
  \end{equation*}  
  In case $q \ge 2$ the ring $R$ is in class $\clH{p,q}$; for $q\le 1$
  the case analysis shifts to $p$.

  In case $p=0$ the distinction between the classes $\clG{r}$ and
  $\clH{0,q}$ is made by comparing $q$ and $r$; they are equal if and
  only if $R$ is in class $\clH{0,q}$.

  In case $p=1$ the distinction between the classes $\clB$ and
  $\clH{1,q}$ is made by comparing $q$ and $r$; they are equal if and
  only if $R$ is in class $\clH{1,q}$.

  In case $p=3$ the distinction between the classes $\clT$ and
  $\clH{3,q}$ is drawn by the invariant $\mu_{e-1}$. Recall the
  relation $d=e-3$; expansion of the expressions from
  \cite[2.1]{LLA12} yields $\mu_{e-1} = \mu_{e-2} + ln - 2$ if $R$ is
  in $\clT$ and $\mu_{e-1} = \mu_{e-2} + ln - 3$ if $R$ is in
  $\clH{3,q}$.

  In all other cases, i.e. $p=2$ or $p \ge 4$, the ring $R$ is in
  class $\clH{p,q}$.
\end{proof}

\begin{remk}
  One can also classify a local ring $R$ of codepth $3$ based on the
  invariants $e$, $h$, $l$, $n$, $\b_2, \ldots \b_5$, and
  $\mu_{e-2}$. In the case $p=3$ one then discriminates between the
  classes by looking at $\b_5$, which is $\b_4 + l\b_3 +(n-3)\b_2 +
  \tau$ with $\tau = 0$ if $R$ is in class $\clH{3,q}$ and $\tau = 1$
  if $R$ is in class $\clT$. However, it is not possible to classify
  $R$ based on Betti numbers alone. Indeed, rings in the classes
  $\clB$ and $\clH{1,1}$ have identical Poincar\'e series and so do
  rings in the classes $\clG{r}$ and $\clH{0,1}$.
\end{remk}

\begin{remk}
  \label{rmk:chn}
  A local ring $R$ of codepth $c \le 2$ can be classified based on the
  invariants $c$, $h$, and $n$. Indeed, if $c \le 1$ then $R$ is a
  hypersurface; i.e.\ it belongs to class $\clC{c}$. If $c=2$ then $R$
  belongs to class $\clC{2}$ if and only if it is Gorenstein ($h=0$
  and $n=1$); otherwise it belongs to class $\clS$.
\end{remk}

\begin{algo}
  \label{alg}
  From \rmkref{chn} and the proof of \prpref{pqr} one gets the
  following algorithm that takes as input invariants of a local ring
  of codepth $c\le 3$ and outputs its~class.
  \begin{list}{$\scriptstyle\blacksquare$}{
      \setlength{\leftmargin}{4em} \setlength{\labelwidth}{2em}
      \setlength{\itemsep}{.5ex}}
  \item[INPUT:] $c$, $e$, $h$, $l$, $n$, $\b_2$, $\b_3$, $\b_4$,
    $\mu_{e-2}$, $\mu_{e-1}$
  \item In case $c \le 1$ set \emph{Class} $= \clC{c}$
  \item In case $c=2$
    \begin{list}{$\diamond$}{ \setlength{\topsep}{0mm} }
    \item if (\ $h = 0$ and $n=1$\ ) then set \emph{Class} $ =
      \clC{2}$
    \item else set \emph{Class} $= \clS$
    \end{list}
  \item In case $c = 3$
    \begin{list}{$\diamond$}{ \setlength{\topsep}{0mm} }
    \item if (\ $h=0$ and $n=1$\ ) then set $r=l+1$
      \begin{list}{$\bullet$}{ \setlength{\topsep}{0mm} }
      \item if $r=3$ then set \emph{Class} $ = \clC{3}$
      \item else set \emph{Class} $ = \clG{r}$
      \end{list}
    \item else compute $p$ and $q$
      \begin{list}{$\bullet$}{ \setlength{\topsep}{0mm} }
      \item if (\ $q \ge 2$ or $p=2$ or $p \ge 4$\ ) then set
        \emph{Class} $ = \clH{p,q}$
      \item else compute $r$
        \begin{list}{$\circ$}{ \setlength{\topsep}{0mm} }
        \item In case $p=0$
          \begin{list}{$-$}{ \setlength{\topsep}{0mm} }
          \item if $q=r$ then set \emph{Class} $ = \clH{0,q}$
          \item else set \emph{Class} $ = \clG{r}$
          \end{list}
        \item In case $p=1$
          \begin{list}{$-$}{ \setlength{\topsep}{0mm} }
          \item if $q=r$ then set \emph{Class} $ = \clH{1,q}$
          \item else set \emph{Class} $ = \clB$
          \end{list}
        \item In case $p=3$
          \begin{list}{$-$}{ \setlength{\topsep}{0mm} }
          \item if $\mu_{e-1} = \mu_{e-2} + ln - 2$ then set
            \emph{Class} $ = \clT$
          \item else set \emph{Class} $ = \clH{3,q}$
          \end{list}
        \end{list}
      \end{list}
    \end{list}
  \item[OUTPUT:] \emph{Class}
  \end{list}
\end{algo}

\begin{remk}
  \label{rmk:mu}
  Given a local ring $R=Q/I$ the invariants $e$ and $h$ can be
  computed from $R$, and $c$, $l$, and $n$ can be determined by
  computing a minimal free resolution of $R$ over $Q$. The Betti
  numbers $\b_2,\b_3,\b_4$ one can get by computing the first five
  steps of a minimal free resolution $F$ of $k$ over $R$. Recall the
  relation $d = e-c$; the Bass numbers $\mu_{e-2}$ and $\mu_{e-1}$ one
  can get by computing the cohomology in degrees $d+1$ and $d+2$ of
  the dual complex $F^* = \Hom{F}{R}$. For large values of $d$, this
  may not be feasible, but one can reduce $R$ modulo a regular
  sequence $\mathbf{x} = x_1,\ldots,x_d$ and obtain the Bass numbers
  as $\mu_{d+i}(R) = \mu_i(R/(\mathbf{x}))$;
  cf.~\cite[3.1.16]{bruher}.
\end{remk}

\section{The implementation}
\label{sec:imp}

The \emph{Macaulay\,2} package \emph{CodepthThree} implements
Algorithm~\ref{alg}.  The function \emph{torAlgClass} takes as input a
quotient $Q/I$ of a polynomial algebra, where $I$ is contained in the
irrelevant maximal ideal $\mathfrak{N}$ of $Q$. It returns the class
of the local ring $R$ obtained by localization of $Q/I$ at
$\mathfrak{N}$.  For example, the local ring obtained by localizing
the quotient
\begin{equation*}
  \poly[\QQ]{x,y,z}/(xy^2,xyz,yz^2,x^4-y^3z,xz^3-y^4)
\end{equation*}
is in class $\clG{2}$; see \cite{LWCOVla}. Here is how it looks when
one calls the function \emph{torAlgClass}.
\begin{verbatim}
Macaulay2, version 1.6
with packages: ConwayPolynomials, Elimination, IntegralClosure, 
LLLBases, PrimaryDecomposition, ReesAlgebra, TangentCone

i1 : needsPackage "CodepthThree";
i2 : Q = QQ[x,y,z];
i3 : I = ideal (x*y^2,x*y*z,y*z^2,x^4-y^3*z,x*z^3-y^4);
o3 : Ideal of Q
i4 : torAlgClass (Q/I)
o4 = G(2)
\end{verbatim}

Underlying \emph{torAlgClass} is the workhorse function
\emph{torAlgData} which returns a hash table with the following data:

\vspace{.5\baselineskip}
\begin{tabular}[c]{l|l}
  Key & Value\\
  \hline
  \texttt{"c"} & codepth of $R$\\
  \texttt{"e"} & embedding dimension of $R$\\
  \texttt{"h"} & Cohen--Macaulay defect of $R$\\
  \texttt{"m"} & minimal number of generators of defining ideal of $R$\\
  \texttt{"n"} & type of $R$\\
  \texttt{"Class"} & (non-parametrized) class of $R$\\
  & (`B', `C', `G', `H', `S', `T', `codepth $> 3$', or `zero ring')\\
  \texttt{"p"} & rank of $A_1\cdot A_1$ \\
  \texttt{"q"} & rank of $A_1\cdot A_2$\\
  \texttt{"r"} & rank of $\delta\colon A_2 \to \Hom[k]{A_1}{A_3}$\\
  \texttt{"PoincareSeries"} & Poincar\'e series of $R$\\
  \texttt{"BassSeries"} & Bass series of $R$\\
\end{tabular}

\vspace{.5\baselineskip}
\noindent In the example from above one gets:
\begin{verbatim}
i5 : torAlgData(Q/I)
                                        2    3    4
                              2 + 2T - T  - T  + T
o5 = HashTable{BassSeries => ----------------------  }
                                       2     3    4
                             1 - T - 4T  - 2T  + T
               c => 3
               Class => G
               e => 3
               h => 1
               m => 5
               n => 2
               p => 0
                                               2
                                        (1 + T)
               PoincareSeries => ----------------------
                                           2     3    4
                                 1 - T - 4T  - 2T  + T
               q => 1
               r => 2
\end{verbatim}

To facilitate extraction of data from the hash table, the package
offers two functions \emph{torAlgDataList} and \emph{torAlgDataPrint}
that take as input a quotient ring and a list of keys.  In the example
from above one gets:
\begin{verbatim}
i6 : torAlgDataList( Q/I, {"c", "Class", "p", "q", "r", "PoincareSeries"} )

                                   2
                            (1 + T)
o6 = {3, G, 0, 1, 2, ----------------------}
                               2     3    4
                     1 - T - 4T  - 2T  + T

o6 : List

i7 : torAlgDataPrint( Q/I, {"e", "h", "m", "n", "r"} )

o7 = e=3 h=1 m=5 n=2 r=2 
\end{verbatim}

As discussed in \rmkref{mu}, the computation of Bass numbers may
require a reduction modulo a regular sequence. In our implementation
such a reduction is attempted if the embedding dimension of the local
ring $R$ is more than $3$. The procedure involves random choices of
ring elements, and hence it may fail. By default, up to 625 attempts
are made, and with the function \emph{setAttemptsAtGenericReduction,}
one can change the number of attempts. If none of the attempts are
successful, then an error message is displayed:
\begin{verbatim}
i8 : Q = ZZ/2[u,v,w,x,y,z];

i9 : R = Q/ideal(x*y^2,x*y*z,y*z^2,x^4-y^3*z,x*z^3-y^4);

i10 : setAttemptsAtGenericReduction(R,1)

o10 = 1 attempt(s) will be made to compute the Bass numbers via a generic
      reduction

i11 : torAlgClass R
stdio:11:1:(3): error: Failed to compute Bass numbers. You may raise the 
	    number of attempts to compute Bass numbers via a generic reduction 
	    with the function setAttemptsAtGenericReduction and try again.

i12 : setAttemptsAtGenericReduction(R,25)

o12 = 625 attempt(s) will be made to compute the Bass numbers via a generic
      reduction

i13 : torAlgClass R

o13 = G(2)
\end{verbatim}
Notice that the maximal number of attempts is $n^2$ where $n$ is the
value set with the function \emph{setAttemptsAtGenericReduction}.

\paragraph*{Notes.}
Given $Q/I$ our implementation of Algorithm~\ref{alg} in
\emph{torAlgData} proceeds as follows.
\begin{enumerate}
\item Check if a value is set for \emph{attemptsAtBassNumbers}; if not
  use the default value $25$.

\item Initialize the invariants of $R$ (the localization of $Q/I$ at
  the irrelevant maximal ideal) that are to be returned; see the table
  in \secref{imp}.

\item Handle the special case where the defining ideal $I$ or $Q/I$ is
  $0$. In all other cases compute the invariants $c$, $e$, $h$, $m\;(=
  l+1)$, and $n$.

\item If possible, classify $R$ based on $c$, $e$, $h$, $m$, and
  $n$. At this point the implementation deviates slightly from
  Algorithm~\ref{alg}, as it uses that all rings with $c=3$ and $h=2$
  are of class $\clH{0,0}$; see \cite[3.5]{LLA12}.

\item For rings not classified in step 3 or 4 one has $c=3$;
  cf.~\rmkref{chn}. Compute the Betti numbers $\b_2$, $\b_3$, and
  $\b_4$, and with the formula from \lemref{pqr} compute $p$ and
  $q$. If possible classify $R$ based on these two invariants.

\item For rings not classified in steps 3--5, compute the Bass numbers
  $\mu_{e-2}$ and $\mu_{e-1}$. If $d = e-3$ is positive, then the Bass
  numbers are computed via a reduction modulo a regular sequence of
  length $d$ as discussed above. Now compute $r$ with the formula
  from \lemref{pqr} and classify $R$.

\item The class of $R$ together with the invariants $c$, $l = m-1$,
  and $n$ determine its Bass and Poincar\'e series;
  cf.~\cite[2.1]{LLA12}.

\end{enumerate}

If $I$ is homogeneous, then various invariants of $R$ can be
determined directly from the graded ring $Q/I$. If $I$ is not
homogeneous, and $R$ hence not graded, then functions from the package
\emph{LocalRings} are used.

\bibliographystyle{plain}

\providecommand{\MR}[1]{\mbox{\href{http://www.ams.org/mathscinet-getitem?mr=#1}{#1}}}
\renewcommand{\MR}[1]{\mbox{\href{http://www.ams.org/mathscinet-getitem?mr=#1}{#1}}}


\end{document}